\def\hpq0{h^{p,q}_{\leq 0}}
\def\Hpq0{\H_{\leq 0}^{p,q}}
\def\dbar{\bar\partial}
\def\ddbar{\partial\dbar}
\def\H{{\mathcal H}}
\def\Re{{\rm Re\,  }}
\def\be{\begin{equation}}
\def\ee{\end{equation}}
\newtheorem{thm}{Theorem}[section]
\newtheorem{lma}[thm]{Lemma}
\newtheorem{prop}[thm]{Proposition}
\theoremstyle{definition}
\theoremstyle{remark}
\newtheorem{preremark}{Remark}
\newtheorem{preex}{Example}
\numberwithin{equation}{section}
\begin{document}

\title[]
{ $L^2$-extension of $\dbar$-closed forms}

\address{B Berndtsson :Department of Mathematics\\Chalmers University
  of Technology \\
  and Department of Mathematics\\University of G\"oteborg\\S-412 96
  G\"OTEBORG\\SWEDEN,\\} 

\email{ bob@math.chalmers.se}

\author[]{ Bo Berndtsson}

\begin{abstract}
Generalizing and strengthening a recent result of Koziarz, we prove a
version of the Ohsawa-Takegoshi-Manivel theorem for $\dbar$-closed
forms. 
\end{abstract}

\bigskip

\maketitle

\section{Introduction}

The celebrated Ohsawa-Takegoshi-Manivel extension theorem,
\cite{Ohsawa},\cite{Manivel}  gives optimal
conditions for the extension of holomorphic sections of line bundles
from a divisor to the ambient space. In Manivel's article,
\cite{Manivel}, it is stated that a completely parallell result holds
for smooth $\dbar$-closed forms of higher degree. There is however a problem
in the proof of this in \cite{Manivel} which is connected with the
regularity of solutions of certain $\dbar$-equations with singular
weights. This problem is also discussed in \cite{Demailly}, where a
strategy towards its solution is put forward. 

Recently, an at least moral solution of this problem was given by
Koziarz, \cite{Koziarz}. Instead of looking at the extension of
individual forms, Koziarz considered the extension of cohomology
classes, i e extended closed forms up to a $\dbar$-exact error. This
formulation is actually more natural than the original problem since
cohomology classes have well defined restrictions on divisors, whereas
$\dbar$-closed forms restrict only if a somewhat artificial condition
of smoothness is imposed. Koziarz's method is inspired by work of Siu,
\cite{Siu}, and consists in representing cohomology classes by Cech
cocycles. These cocycles consist of holomorphic objects
for which the available machinery works better.

The purpose of this note is twofold. First we will prove a simple
proposition saying that a smooth $\dbar$-exact form on a divisor can
always be extended to a closed form with arbitrary small $L^2$-norm in
the ambient space. (This property characterizes exact
forms.) This means that Koziarz's theorem on the extension up to an
exact error actually gives a solution to the original problem on
extension of closed forms. Second, we will give an alternative proof
of Koziarz's theorem, following the method in \cite{Berndtsson}. The
advantage with 
this alternative proof is that it gives an absolute constant for the
extension, whereas in Koziarz's theorem the constant depended on the
manifold and the divisor. Moreover, the curvature conditions that
guarantee extendability are shown to be somewhat more liberal for
forms of higher 
degree than for holomorphic sections. Finally, the proof exhibits the
significance of extension of cohomology classes in a seemingly
interesting way.

Let us comment a little bit more on this. If $u$ is a holomorphic
section of $K_\Delta +L$ over a divisor $\Delta$, the method in
\cite{Berndtsson}, see also \cite{Amar}, consists in solving the
equation
$$
\dbar v= u\wedge [\Delta]:=g.
$$
The right hand side here is not a $L^2$-form but a current, but
nevertheless it turns out that $L^2$-methods can be used here. One
cannot however get a solution $v$ in $L^2$. If the divisor $\Delta$ is
defined by a section $s$ of some line bundle $S$ over the ambient manifold
$X$, the solution of the extension problem is $sv$, so what we want is
an $L^2$-estimate for $sv$. Dually, (and formally!) this corresponds
to an estimate for smooth testforms $\alpha$ like
\be
|\langle g,\alpha\rangle|^2\leq C \int |\dbar^*\alpha|^2/|s|_\psi^2
\ee
(where $\psi$ is some metric on $S$). But this dual formulation is
only formal. The fact that the weight $|s|^{-2}$ is nonintegrable
causes a problem in the functional analysis involved since all smooth
test forms do not have finite norm with respect to this weight. This
problem can be circumvented if we instead prove a stronger estimate
\be
|\langle g,\alpha\rangle|^2\leq C \int |\dbar^*\alpha|^2/|s|_\psi^r
\ee
where $r<2$. Then the functional analytic difficulty disappears and
one even gets a stronger result than is asked for. 

We now want to follow the same route for forms of higher degree. Booth
estimates (1.1) and (1.2) can then be proved in much the same manner
as for holomorphic sections. As in the case of holomorphic sections,
the best thing would  be to use (1.2), since that is a bona fide
dual formulation of the $\dbar$-problem. But this causes problems with
regularity. One would then need to dicuss regularity properties in
$L^2$-spaces with singular weights, which leads back to the original
problem with Manivel's argument. We therefore choose to work with
(1.1) instead. Then the regularity problems disappear since we can go
back and forth between estimates with the singular weight $|s|^{-2}$
and estimates without that weight by multiplying and dividing with
$s$. The price we have to pay for this is that (1.1) is no longer a
dual formulation of the $\dbar$-estimate, and so not a dual
formulation of the extension problem. But, miraculously, it turns out
to be a dual formulation of the extension of cohomology classes, and
this is what makes the scheme work.

In this paper we will suppose all the time that $X$ is a compact
K\"ahler manifold. Maybe the same arguments could be pushed to non
compact situations, but the compactness assumption simplifies and
makes the argument a little bit simpler than in \cite{Berndtsson}

\section{$\dbar$-exact forms}
In this section we discuss the extension of $\dbar$-exact forms. The
main point is the following proposition.
\begin{prop}
Let $X$ be an $n$-dimensional compact complex manifold, and let
$\Delta$ be a smooth divisor in $X$. Let $L$ be a holomorphic line
bundle over $X$. Let $u$ be a smooth
$\dbar$-closed $L$-valued $(0,q)$-form on $\Delta$, $q\geq 1$. Then
$u$ is $\dbar$-exact on $\Delta$ if and only if, for any $\epsilon>0$,
there is an extension, $U$, of $u$ to $X$ with $L^2$-norm smaller than
$\epsilon$. 
\end{prop}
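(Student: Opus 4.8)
The statement is an equivalence whose two halves call for different tools. Throughout I fix a holomorphic section $s$ of $S=\O_X(\Delta)$ cutting out $\Delta$ and equipped with a smooth metric, so that $\abs{s}$ is comparable to the distance to $\Delta$; I also use that, by Serre duality on the compact complex manifold $\Delta$ (of dimension $n-1$), the smooth $\dbar$-closed form $u$ is $\dbar$-exact if and only if $\int_\Delta u\wedge\beta=0$ for every $\dbar$-closed $L^{-1}$-valued $(n-1,n-1-q)$-form $\beta$ on $\Delta$. Both directions then amount to relating the ambient $L^2$-norm of a $\dbar$-closed extension of $u$ to these periods.

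For the forward implication, suppose $u=\dbar w$ on $\Delta$ and extend $w$ smoothly to an $L$-valued $(0,q-1)$-form $\tilde w$ on $X$. Then $\dbar\tilde w$ is a $\dbar$-closed extension of $u$, since restriction commutes with $\dbar$, but its norm need not be small. To shrink it I would insert a logarithmic cut-off: let $\chi_\delta$ equal $1$ on $\setof{\abs{s}\le\delta^2}$ and $0$ on $\setof{\abs{s}\ge\delta}$, interpolating as an affine function of $\log\abs{s}$, and set $U_\delta=\dbar(\chi_\delta\tilde w)$. This is $\dbar$-exact, hence closed, and equals $\dbar\tilde w$ near $\Delta$, so it still restricts to $u$. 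Writing $U_\delta=\chi_\delta\,\dbar\tilde w+\dbar\chi_\delta\wedge\tilde w$, the first term is supported in the shrinking tube $\setof{\abs{s}\le\delta}$ and contributes $O(\delta)$ to the norm, while the standard computation $\int_X\abs{\dbar\chi_\delta}^2\sim 1/\abs{\log\delta}\to0$ controls the second. Hence $\norm{U_\delta}_{L^2(X)}\to0$, giving extensions of arbitrarily small norm.

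For the reverse implication, let $U$ be any $\dbar$-closed extension of $u$ and let $\beta$ be as above. I extend $\beta$ to a smooth $L^{-1}$-valued form $B$ on $X$ chosen so that $\dbar B$ vanishes to first order along $\Delta$, i.e.\ $\dbar B=O(\abs{s})$; this is possible because the tangential part of $\dbar B$ on $\Delta$ is $\dbar\beta=0$ automatically, and the single normal coefficient can be killed by prescribing the first normal derivative of the extension, the local choices gluing by a partition of unity since they all restrict to $\beta$. Now $\int_\Delta u\wedge\beta=\langle[\Delta],U\wedge B\rangle$, and by the Poincar\'e--Lelong identity $[\Delta]=\Theta+\dbar\gamma$, with $\Theta$ the smooth Chern curvature of $S$ and $\gamma=\tfrac{1}{2\pi i}\,\ds\log\abs{s}^2$ a locally integrable $(1,0)$-current satisfying $\abs{\gamma}=O(\abs{s}^{-1})$. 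Integrating by parts and using $\dbar U=0$ gives
\[
\int_\Delta u\wedge\beta=\int_X\Theta\wedge U\wedge B\pm\int_X\gamma\wedge U\wedge\dbar B.
\]
In the first integral $\Theta\wedge B$ is a fixed smooth form, so it is bounded by $C(\beta)\norm{U}_{L^2(X)}$; in the second, the dangerous factor $\gamma$ is only $L^1$ near $\Delta$, but $\gamma\wedge\dbar B=O(1)$ thanks to $\dbar B=O(\abs{s})$, so this term too is bounded by $C(\beta)\norm{U}_{L^2(X)}$. Applying this to a sequence of extensions with $\norm{U}_{L^2(X)}\to0$ forces $\int_\Delta u\wedge\beta=0$ for every admissible $\beta$, whence $u$ is $\dbar$-exact by Serre duality.

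The main obstacle is precisely the singularity $\gamma\sim\abs{s}^{-1}$ in the reverse direction: the naive pairing against $[\Delta]$ involves a weight that is not square-integrable across $\Delta$, the same non-integrability of $\abs{s}^{-2}$ flagged in the introduction. My resolution is to spend the freedom in the extension $B$ on making $\dbar B$ vanish to first order, exactly cancelling the singularity; verifying that such a $B$ exists and that the integration by parts against the current $[\Delta]$ is legitimate (a routine regularization lets me take $U$ smooth) is where the care is needed.
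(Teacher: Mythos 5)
Your forward implication is essentially the paper's own proof: extend a primitive $w$ of $u$ arbitrarily, multiply by a cutoff equal to $1$ near $\Delta$, and use the logarithmic capacity computation $\int_X\abs{\dbar\chi_\delta}^2\sim 1/\abs{\log\delta}\to 0$ (the paper uses $\log\log(1/\abs{z})$-type cutoffs, but the mechanism is identical). The reverse implication is where you genuinely diverge. The paper's converse is soft: it replaces the small extensions $U_\epsilon$ by the harmonic representatives $\mathcal{U}_\epsilon$ of their classes, whose $L^2$-norms are no larger; finite-dimensionality of the harmonic space upgrades $L^2$-smallness to sup-norm smallness, so the restrictions $\mathcal{U}_\epsilon|_\Delta$ tend to zero uniformly; since each $u-\mathcal{U}_\epsilon|_\Delta$ is exact on $\Delta$, $u$ lies in the closure of the exact forms, and closedness of the range of $\dbar$ on the compact manifold $\Delta$ finishes. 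Your route --- test $u$ against Serre-dual classes $\beta$ and bound $\int_\Delta u\wedge\beta$ by $C(\beta)\norm{U}_{L^2(X)}$ via Lelong--Poincar\'e --- is a legitimate and interesting alternative, but as written it has a gap.

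The gap is the claim that $\beta$ extends to a smooth $B$ with $\dbar B=O(\abs{s})$, i.e.\ with \emph{all} components of $\dbar B$ vanishing on $\Delta$. In coordinates where $\Delta=\setof{z_1=0}$, the value of $\dbar B$ along $\Delta$ has a tangential part (no $dz_1,d\bar z_1$), which is $\dbar_\Delta\beta=0$ automatically; a $d\bar z_1$-component, which you can indeed kill by prescribing the normal derivative (equivalently adding a term $\bar s D$), and this is a pointwise linear condition, so partition-of-unity gluing is harmless for it; but also components containing $dz_1$, which your argument ignores and which you cannot kill in general: any modification of $B$ that still restricts to $\beta$ changes the $dz_1$-component of $\dbar B|_\Delta$ only by $\dbar_\Delta$-exact terms, so full first-order vanishing is obstructed by a Dolbeault class on $\Delta$ (with values in $L^{-1}$ twisted by the conormal bundle). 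Moreover your gluing step fails for the full condition: $\sum_i\dbar\theta_i\wedge(B_i-B_j)$ vanishes only after pullback to $\Delta$, not as a form, since agreement of restrictions says nothing about normal components. So ``$\dbar B=O(\abs{s})$'' is neither established nor, in general, attainable.

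Fortunately your proof needs less, and what it needs is exactly what your two correction steps deliver. The singular part of $\gamma=\frac{1}{2\pi i}\ds\log\abs{s}^2$ is proportional to $ds/s$; hence in $\gamma\wedge U\wedge\dbar B$ every component of $\dbar B$ containing $ds$ is wedged to zero, and only the $ds$-free components ever meet the factor $1/\abs{s}$. Those are precisely the tangential part (zero automatically) and the $d\bar s$-component (killed by your normal-derivative correction), so the corrected $B$ satisfies $ds\wedge\dbar B=O(\abs{s})$, which gives $\gamma\wedge\dbar B=O(1)$, the bound $\abs{\int_\Delta u\wedge\beta}\leq C(\beta)\norm{U}_{L^2(X)}$, and the Serre-duality conclusion. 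With the claim restated in this weaker, correct form your converse goes through; it is, however, considerably heavier than the paper's three-line harmonic-representative argument, which avoids currents and jet corrections altogether.
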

Here $L^2$-norms are taken with respect to some smooth metric 
and some arbitrary smooth volume form. In the proof we use 
the next lemma.

\begin{lma}
 There is a sequence of cutoff-functions $\rho_\epsilon$ such
that

1. The sets where $\rho_\epsilon(z)=1$ are neighbourhoods of  $\Delta$
shrinking to $\Delta$, and the sets where $\rho_\epsilon (z)=0$ increase to
$X\setminus \Delta$.

2. $\|\dbar\rho_\epsilon\|$ goes to zero with $\epsilon$.
\end{lma}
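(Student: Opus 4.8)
The plan is to build $\rho_\epsilon$ from the distance to $\Delta$ measured by a holomorphic defining section, exploiting the fact that in complex codimension one the logarithm of that distance admits a transition region of arbitrarily large \emph{logarithmic} width while still sitting inside an arbitrarily thin tube around $\Delta$. Concretely, let $S$ be the line bundle associated to $\Delta$ and $s$ a holomorphic section with $\setof{s=0}=\Delta$, as in the introduction; equip $S$ with an arbitrary smooth Hermitian metric, so that $\abs{s}^2$ is a global smooth nonnegative function vanishing to second order exactly along $\Delta$, comparable to the square of the distance to $\Delta$ nearby. I would set
\be
\rho_\epsilon=\chi_\epsilon\paren{\log\abs{s}^2},
\ee
where $\chi_\epsilon:\R\to[0,1]$ is smooth, equals $1$ on $(-\infty,T_1]$ and $0$ on $[T_2,\infty)$, and decreases from $1$ to $0$ with $\abs{\chi_\epsilon'}\le 2/W_\epsilon$, where $W_\epsilon:=T_2-T_1$. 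The thresholds are chosen with $T_1<T_2<0$, $T_1\to-\infty$ (so that $\setof{\rho_\epsilon=1}=\setof{\abs{s}^2\le e^{T_1}}$ shrinks to $\Delta$), $T_2\to-\infty$ (so that $\setof{\rho_\epsilon=0}$ increases to $X\setminus\Delta$), yet $W_\epsilon\to\infty$; for instance $T_1=-2/\epsilon$, $T_2=-1/\epsilon$. Property 1 of the lemma is then immediate.

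For property 2 I would compute $\dbar\rho_\epsilon=\chi_\epsilon'\paren{\log\abs{s}^2}\,\dbar\log\abs{s}^2$. Writing $\abs{s}^2=e^{-\varphi}\abs{f}^2$ in a local trivialization with $f$ holomorphic and $\varphi$ smooth, one gets $\dbar\log\abs{s}^2=-\dbar\varphi+\dbar\bar f/\bar f$, whence $\abs{\dbar\log\abs{s}^2}\le C/\abs{s}$ in a neighbourhood of $\Delta$, the term $\dbar\varphi$ being bounded and $\abs{f}\sim\abs{s}$. Therefore
\be
\norm{\dbar\rho_\epsilon}^2\le C\int \abs{\chi_\epsilon'\paren{\log\abs{s}^2}}^2\frac{1}{\abs{s}^2}\,dV,
\ee
with the integrand supported in the thin tube $\setof{T_1\le\log\abs{s}^2\le T_2}$. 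I would then slice along $\Delta$: in a transverse complex coordinate $w$ with $\abs{s}\sim\abs{w}$ the transverse factor is $\int\abs{\chi_\epsilon'\paren{\log\abs{w}^2}}^2\abs{w}^{-2}\,dA(w)$, and the substitution $u=\log\abs{w}^2$ turns this into $\pi\int\abs{\chi_\epsilon'(u)}^2\,du\le C/W_\epsilon$, since $\abs{\chi_\epsilon'}\le 2/W_\epsilon$ on an interval of length $W_\epsilon$. Integrating the remaining bounded factors over the compact $\Delta$ yields $\norm{\dbar\rho_\epsilon}^2\le C'/W_\epsilon\to0$.

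The one genuinely essential point — and the place where the argument could be said to \emph{almost} fail — is the tension between the two requirements: shrinking $\setof{\rho_\epsilon=1}$ toward $\Delta$ pushes the transition tube inward, which naively forces the gradient to blow up. What rescues us is that the transverse integral depends on $\chi_\epsilon$ only through the logarithmic width $W_\epsilon=T_2-T_1$ and not through the absolute scale $e^{T_1},e^{T_2}$; thus $T_1,T_2$ can both be driven to $-\infty$ (thin tube) while $W_\epsilon\to\infty$ (small norm). This scale invariance is exactly the statement that a complex hypersurface, being of real codimension two, has vanishing $W^{1,2}$-capacity, and it is precisely this borderline codimension-one feature that makes the lemma true; the same construction would be impossible across a real hypersurface. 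Since $\dbar\rho_\epsilon$ is comparable to $d\rho_\epsilon$ for real $\rho_\epsilon$ (one has $\abs{d\rho_\epsilon}^2=2\abs{\dbar\rho_\epsilon}^2$), nothing is lost in restricting attention to the $\dbar$-part.
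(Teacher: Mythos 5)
Your proof is correct, and at bottom it runs on the same engine as the paper's: a cutoff that depends only on $\log|s|$, with the transition confined to a logarithmic annulus around $\Delta$ whose Dirichlet energy tends to zero --- the zero-capacity property of complex hypersurfaces that you identify at the end. The implementations, however, are genuinely different. The paper works locally: in a chart where $\Delta=\{z_1=0\}$ it sets $\rho_\epsilon=\chi_{1/\epsilon}\circ\rho$ with $\rho=\log\log(1/|z_1|)$ and $\chi_k'$ bounded, dominates $i\partial\rho_\epsilon\wedge\dbar\rho_\epsilon$ by the Poincar\'e metric of the punctured disk (complete, with finite area near the puncture), and gets property 2 by dominated convergence, since the transition annuli shrink to the empty set; globality then comes from a partition of unity. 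Your cutoff is the paper's in disguise: a unit-width transition in the $\log\log$ scale is exactly a transition of width $W_\epsilon\to\infty$ in the $\log$ scale with $|\chi_\epsilon'|\leq C/W_\epsilon$ there. What your version buys is (i) globality for free --- using the canonical section $s$ of the bundle $\O(\Delta)$ avoids charts and the partition of unity --- and (ii) an explicit rate $\|\dbar\rho_\epsilon\|^2\leq C/W_\epsilon$ in place of a soft convergence argument, with no appeal to completeness or the Poincar\'e metric. What the paper's version buys is the conceptual principle that generalizes (any complete metric of finite volume near the divisor yields such cutoffs) and lighter bookkeeping: no Fubini/slicing step, and no need to worry that in your transverse integral the argument of $\chi_\epsilon'$ is $\log|s|^2$ rather than $\log|w|^2$ --- these differ by $O(1)$, which only enlarges the transition interval harmlessly, but that comparison deserves an explicit word in your write-up.
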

\begin{proof} Let first the dimension be 1 and take $X$ to be the unit
  disk and $\Delta$ to 
  be the origin. The main point is that 
there is a complete Kahler metric on the punctured disk, $\omega$,
which gives $\{|z|<1/2\}$ finite area.
Indeed, the Poincare metric
$$
\omega=idz\wedge d\bar z/(|z|^2(\log|z|)^2)
$$
has this property. Completeness implies that there is some realvalued
function near the origin, $\rho$ , such that $\rho(z)$ tends to
infinity when $z$ tends to zero and 
$$
i\partial \rho\wedge\dbar\rho\leq \omega.
$$
Explicitly, $\rho(z)=\log\log(1/|z|)$ will do. Define  functions
$\chi_k(x)$ on the positive halfaxis, equal to 0 when $x<k$ , to 1
when $x>k+1$, and having $\chi_k'$ bounded. Then put 
$$
\rho_\epsilon=\chi_{1/\epsilon}\circ\rho.
$$
Then 1 is clear and 2 follows by dominated convergence since
$$
\int_{|z|<1/2} i\partial\rho_\epsilon\wedge\dbar\rho_\epsilon\leq
\int_{|z|<1/2}\chi_\epsilon'\omega.
$$
The general case is basically the same. We can cover $\Delta$ by a finite
number of coordinate neighbourhoods, inside which $\Delta$ is defined by
the equation $z_1=0$. Then take $\rho_\epsilon(z_1)$ with
$\rho_\epsilon$ defined as above and piece together with a partition
of unity. 
\end{proof}

With this we can turn to the proof of the proposition. Assume first
that $u=\dbar v$ on $\Delta$ with $v$ smooth. We extend $v$ to $X$ in
an arbitrary way and let 
$$
U_\epsilon=\dbar(\rho_\epsilon v).
$$
By the Lemma this a $\dbar$-closed, or even exact, extension of $u$
with $L^2$-norm going to zero with $\epsilon$.

For the converse, assume there are some $\dbar$-closed
extensions, $U_\epsilon$,   with
$L^2$-norms going to 
zero. Let $\mathcal{U}_\epsilon$ be the harmonic representative of the
cohomology class $[U_\epsilon]$. The norms of the harmonic
representatives are smaller, so they go to zero too. Now, the space of
harmonic forms is finite dimensional, so all norms are
equivalent. Hence the supnorms of $\mathcal{U}_\epsilon$ also go to zero, so
the restrictions of $\mathcal{U}_\epsilon$ to $\Delta$ also go to
zero. Since on $\Delta $, 
$u-\mathcal{U}_\epsilon$ is exact, it follows that $u$ lies in the closure of
the space of exact forms. But $\dbar$ has closed range on a compact
manifold, so $u$ must be exact.

\section{ $\dbar$-closed  forms}

In this section we adapt the argument in \cite{Berndtsson} to forms of
higher degree. We will use the residue formulation of the extension
problem and the set up is as follows. 

$X$ is a compact K\"ahler manifold, with K\"ahler form $\omega$  and
$L$ is a holomorphic line 
bundle over $X$. $\Delta$ is a smooth divisor in
$X$, given as $\Delta=s^{-1}(0)$, with $s$ a holomorphic section of a
line bundle $S$. Let $u$ be a smooth $L$-valued $\dbar$-closed $(n-1,q)$-form on
$\Delta$. We want to find a smooth $L$-valued $\dbar$-closed
$(n,q)$-form, $U$, on $X$, 
such that
\be
U=ds\wedge u
\ee
on $\Delta$. Note that $u$ could alternately be interpreted as a
$(0,q)$-form on $\Delta$ with values in $K_\Delta +L$. By the
adjunction isomorphism
$$
u\mapsto ds\wedge u
$$
between $K_\Delta$ and $(K_X+ S)|_Y$ this means that we extend a $(0,q)$-form
with values in
\be
F:= K_X+S+L
\ee
to a form with values in $F$. 
\begin{thm}
Assume that $\phi$ is a smooth metric on $L$ and that $\psi$ is a
smooth metric on $S$ such that
$$
i\ddbar\phi\wedge \omega^q\geq \epsilon\, i\ddbar\psi\wedge \omega^q
$$
and
$$
i\ddbar\phi\wedge \omega^q\geq 0.
$$
Assume moreover the normalizing inequality
$$
\log |s|^2 e^{-\psi}\leq -1/\epsilon.
$$

 Let $u$ be a smooth $\dbar$-closed $(n-1,q)$-form with
  values in $L$ over $\Delta$. Then there is a $\dbar$-closed
  $(n,q)$-form, $U$, with values in $S+L$ over $X$ such that
$$
U=ds\wedge u
$$
on $\Delta$ and
$$
\int_X |U|^2 e^{-\phi-\psi}dV_X\leq C_0 \int_\Delta
|u|^2 e^{-\phi}dV_Y
$$
where $C_0$ is an absolute constant. The norms and the volume
forms are defined by the K\"ahler form $\omega$. 
\end{thm}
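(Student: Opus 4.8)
The plan is to follow the duality strategy sketched in the introduction, working with estimate (1.1) rather than the singular-weight formulation (1.2). The object to produce is a $\dbar$-closed $(n,q)$-form $U$ with values in $S+L$ satisfying $U = ds\wedge u$ on $\Delta$. I would begin by fixing a smooth extension $\tilde u$ of $u$ to all of $X$ (as a $(n-1,q)$-form with values in $L$, say supported in a neighbourhood of $\Delta$), and forming the current
\[
g := \dbar(\tilde u) \cdot \text{(suitable cutoff)} \quad\text{together with}\quad ds\wedge u\wedge[\Delta],
\]
the point being that $ds\wedge u\wedge[\Delta]$ is the $(n,q+1)$-current whose primitive we seek. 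More precisely, the natural target is to solve a $\dbar$-equation $\dbar U = 0$ with the prescribed residue $ds\wedge u$ along $\Delta$; equivalently one solves $\dbar V = g$ for an appropriate current $g$ concentrated on $\Delta$ and sets $U = sV$, exactly as in the line-bundle case described in the introduction.

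Next I would set up the weighted $L^2$-machinery. The guiding principle from \cite{Berndtsson} is to establish the dual estimate
\[
|\langle g,\alpha\rangle|^2 \leq C \int_X |\dbar^*\alpha|^2\, e^{-\phi}\, \frac{dV_X}{|s|_\psi^r},
\]
for test forms $\alpha$, with $r<2$ so as to avoid the nonintegrability of $|s|^{-2}$ and the attendant regularity difficulties. The key analytic input is a Hörmander–Kohn–Morrey–type identity/inequality on the compact Kähler manifold $X$, applied with the singular weight $e^{-\phi-\psi}$ twisted by a factor involving $|s|^{-r}$; the positivity needed to absorb the error terms is supplied precisely by the two curvature hypotheses $i\ddbar\phi\wedge\omega^q \geq \epsilon\, i\ddbar\psi\wedge\omega^q$ and $i\ddbar\phi\wedge\omega^q\geq 0$, together with the normalization $\log|s|^2 e^{-\psi}\leq -1/\epsilon$. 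Because we are working with $(n,q)$-forms, the relevant curvature operator acts through $\omega^q$ rather than the full Hessian, which is what makes the hypotheses weaker than in the holomorphic ($q=0$) case; I would track the Bochner–Kodaira terms carefully to see the factor $\omega^q$ emerge. The normalizing inequality is what lets one pass back and forth between the singular weight $|s|^{-2}$ and the nonsingular estimate by multiplying and dividing by $s$, keeping everything smooth.

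The crucial conceptual step—and the place where this argument differs from the naive one—is the identification of the pairing $\langle g,\alpha\rangle$ as a \emph{dual formulation of the extension of cohomology classes} rather than of individual forms. Here I would invoke the principle, made rigorous by Proposition 2.1 above, that extending up to a $\dbar$-exact error is equivalent to genuine extension: once the dual estimate yields a closed extension $U$ of the cohomology class of $ds\wedge u$ along $\Delta$, the Proposition guarantees that the $\dbar$-exact discrepancy can be removed at arbitrarily small $L^2$-cost, producing an honest extension with $U = ds\wedge u$ on $\Delta$. The final $L^2$-bound with the \emph{absolute} constant $C_0$ then comes out of the constant in the Bochner–Kodaira estimate, which depends only on the numerical curvature inequalities and not on the geometry of $X$ or $\Delta$.

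\emph{The main obstacle} I anticipate is making the duality precise: the pairing $\langle g,\alpha\rangle$ is only formal because $g$ is a current supported on $\Delta$ while $\alpha$ ranges over smooth test forms, and one must verify that the estimate on $\dbar^*\alpha$ actually controls this pairing in a way that produces a solution of the correct bidegree and residue. Concretely, the difficulty is to show that the solution $V$ obtained from the Hahn–Banach/Riesz representation argument, after multiplication by $s$, is smooth enough (not merely $L^2$) and restricts correctly to $\Delta$ to give $U = ds\wedge u$. This is exactly the regularity issue that derailed the original approach; the resolution, following the introduction, is that working with (1.1) instead of (1.2) lets us multiply and divide by $s$ freely, so all the $\dbar$-problems we actually solve have smooth (nonsingular) weights and the standard elliptic regularity for $\dbar$ applies. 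Verifying that this maneuver closes the loop—that the cohomological extension really does specialize to the pointwise residue condition—is the heart of the proof.
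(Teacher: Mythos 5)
Your proposal has the right skeleton (dual formulation against a current supported on $\Delta$, Riesz representation, multiplication by $s$, and Proposition 2.1 to remove the $\dbar$-exact discrepancy at the end — that last step matches the paper exactly), but it contains a genuine wrong turn and a genuine gap. The wrong turn: after announcing in your first sentence that you will work with estimate (1.1), your key displayed inequality is estimate (1.2), with weight $|s|_\psi^{-r}$, $r<2$, which you claim avoids ``the attendant regularity difficulties.'' This is precisely backwards for forms of higher degree. The paper's point is that (1.2), while a bona fide dual formulation of the $\dbar$-problem, forces one to discuss regularity of solutions in $L^2$-spaces with singular weights — which is exactly the unresolved problem in Manivel's argument — and therefore the proof must be run with the full singular weight $|s|^{-2}$ of (1.1). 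The price of (1.1) is that it is \emph{not} a dual formulation of the $\dbar$-equation: because smooth test forms do not all have finite norm against $|s|^{-2}$, the Riesz representation only yields the identity $\langle f,\alpha\rangle=\int_X v\cdot\overline{\dbar^*_\phi\alpha}\,e^{-\phi}$ for the restricted class of $\alpha$ with $\dbar^*_\phi\alpha=0$ on $\Delta$, so one does \emph{not} obtain $\dbar v=f$. Your proposal never confronts this restricted duality, which is the central structural feature of the argument.

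The gap is that you do not supply the mechanism by which regularity and the residue condition are recovered from this weaker information. In the paper this is done in three concrete steps: (a) among all $v$ satisfying the restricted identity, choose the one of minimal weighted norm $\int|v|^2|s|^2e^{-\phi-\psi}$ (the affine solution set is closed, so a minimizer exists); (b) show that minimality forces $\dbar^*_{\phi+\psi}(sv)=0$ and, by testing against forms $\alpha_\xi=\bar s e^{-\psi}\xi$ (which automatically satisfy the vanishing condition on $\Delta$), that $\dbar(sv)=0$ as well — so $U:=sv$ is harmonic, hence smooth, with no appeal to singular-weight regularity theory at all; and (c) a pairing computation against forms $\gamma_\alpha$ with $\dbar(\bar\gamma_\alpha e^{-\phi})=0$ on $\Delta$, together with the lemma that forms $\dbar_\Delta$-closed on $\Delta$ extend to forms $\dbar_X$-closed along $\Delta$, showing that $U=ds\wedge\tilde u$ on $\Delta$ with $u-\tilde u$ exact. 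Only after (c) does Proposition 2.1 enter, exactly as you say. Your proposal acknowledges that ``verifying that this maneuver closes the loop is the heart of the proof,'' but the heart is precisely what is missing: without the minimal-norm/harmonicity argument and the identification of which test forms the duality actually holds for, the multiplication-by-$s$ maneuver produces only an $L^2$-form with no known regularity and no identified restriction to $\Delta$. Separately, the basic estimate itself (the curvature computation with $W=-\log(|s|^2e^{-\psi})$, the correction term handled via $W_1=1-e^{-W}$, and the reduction of general $\alpha$ to $\dbar$-closed $\alpha$) is left entirely to a gesture at Bochner--Kodaira, but that is a forgivable omission in a sketch compared with the structural issues above.
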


The arguments starts with the observation that if $U$ satisfies the
conclusion of the theorem, and if $v:=U/s$, then $v$ has values in
$K_X+L$ and solves
\be
\dbar v = \dbar(1/s)\wedge ds\wedge u= c u\wedge [\Delta]
\ee
where $[\Delta]$ is the current of integration on
$\Delta$. Conversely, let $v$ solve  (3.3) and assume that $U:=sv$ is
smooth. On $\Delta$ we can write $U=ds\wedge \tilde u$ by the
adjunction isomorphism. Then
$$
\dbar v = \dbar(1/s)\wedge ds\wedge \tilde u= c \tilde u\wedge [\Delta].
$$ 
Hence $\tilde u=u$ on $\Delta$, so $U$ solves the extension problem. 

We now try (and fail!) to solve this $\dbar$-problem and start to give
it a dual formulation. Let
$$
f:= u\wedge [\Delta],
$$
a current with measure coefficients, concentrated on $\Delta$ and of 
bidegree $(n, q+1)$. The proof of the next lemma will be postponed to
the end of the section. 
\begin{lma}{\bf ( The basic estimate )}
Assume, in addition to the assumptions in Theorem 3.1,  that 
$$
\|u\|^2_\Delta\leq 1
$$
Then, for any smooth $L$-valued $(n,q)$-form $\alpha$ on $X$
$$
|\langle f,\alpha\rangle|^2\leq C_0\int_X
\frac{|\dbar^*_\phi\alpha|^2}{|s|^2 e^{-\psi}}e^{-\phi}.
$$
\end{lma}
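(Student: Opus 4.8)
The plan is to prove the inequality as an a priori \emph{trace estimate}, following the route of \cite{Berndtsson} and adding only the bookkeeping forced by the degree $q$. First I would unwind the left hand side. Since $f=u\wedge[\Delta]$ is carried by $\Delta$, the pairing $\langle f,\alpha\rangle$ is an integral over $\Delta$ of $u$ against the trace of $\alpha$ along $\Delta$. Writing $\norm{\alpha}_\Delta$ for the $L^2(\Delta)$-norm of that trace and using the normalization $\norm{u}^2_\Delta\leq 1$, Cauchy--Schwarz on $\Delta$ gives
$$
\abs{\langle f,\alpha\rangle}\leq \norm{u}_\Delta\,\norm{\alpha}_\Delta\leq \norm{\alpha}_\Delta .
$$
It therefore suffices to establish the trace bound $\norm{\alpha}_\Delta^2\leq C_0\int_X \abs{\dbar^*_\phi\alpha}^2 e^{-\phi}/(|s|^2e^{-\psi})$, which is the actual content of the lemma and the point at which every hypothesis of Theorem 3.1 is used.

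For the trace bound I would generate the integral over $\Delta$ from a global identity on $X$ by the Poincar\'e--Lelong formula, writing the current $[\Delta]$ as $\ddc\log(|s|^2e^{-\psi})$ plus a smooth multiple of the curvature $i\ddbar\psi$ of $S$. Substituting this into $\langle u\wedge[\Delta],\alpha\rangle$ and integrating by parts moves the $\dbar$ off the logarithmic factor and onto $\alpha$, producing a term of the form $\langle\,\cdot\,,\dbar^*_\phi\alpha\rangle$ weighted by $1/(|s|^2e^{-\psi})$, together with a smooth remainder coming from $i\ddbar\psi$. To keep the singular factor $1/(|s|^2e^{-\psi})$ legitimate I would first regularize, replacing $\log(|s|^2e^{-\psi})$ by $\log(|s|^2e^{-\psi}+\delta)$ (or cutting off as in the Lemma of Section 2), perform the estimate with the resulting smooth weight, and only then let $\delta\to 0$. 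The normalizing inequality $\log|s|^2e^{-\psi}\leq-1/\epsilon$ ensures that $-\log(|s|^2e^{-\psi})$ is positive and bounded below, which is exactly the positivity that keeps the regularized estimate under control and makes the passage to the limit harmless.

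The remaining work is curvature bookkeeping through a Bochner--Kodaira--Nakano estimate. For $(n,q)$-forms the curvature enters only after contraction against $i\ddbar(\phi+\psi)$ wedged with $\omega^q$, and the two hypotheses $i\ddbar\phi\wedge\omega^q\geq\epsilon\, i\ddbar\psi\wedge\omega^q$ and $i\ddbar\phi\wedge\omega^q\geq 0$ are tailored so that the positive contribution of $\phi$ both dominates the (possibly negative) contribution of $\psi$ left over from the Poincar\'e--Lelong substitution and controls the smooth remainder. Because only the $\omega^q$-wedged positivity is needed, rather than full positivity of $i\ddbar\phi$, the hypotheses are genuinely more liberal for higher $q$, as advertised in the introduction; and since the surviving inequalities are invariant under scaling of $u$ and $\alpha$, the constant $C_0$ that emerges is absolute.

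The step I expect to be the main obstacle is the rigorous passage through the singular weight $1/(|s|^2e^{-\psi})$ for forms of bidegree $(n,q)$: one must run the integration by parts and the Bochner--Kodaira--Nakano identity with the regularized weight, check that the error terms created by the regularization vanish as $\delta\to 0$, and verify that the surviving boundary contribution is exactly $\norm{\alpha}_\Delta^2$ with the correct constant. Here the fact that the curvature appears through the wedge with $\omega^q$, rather than as a scalar multiplier as in the holomorphic section case, makes the sign control of these error terms the delicate point; once the regularized identity is organized correctly, the curvature hypotheses render the final estimate essentially automatic.
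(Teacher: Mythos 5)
Your opening Cauchy--Schwarz step contains the fatal move: you reduce the lemma to the trace bound $\|\alpha\|_\Delta^2\leq C_0\int_X|\dbar^*_\phi\alpha|^2e^{-\phi}/(|s|^2e^{-\psi})$ \emph{for every smooth $\alpha$}, and that statement is false in general. Take $\alpha=\dbar^*_\phi\beta$ for a generic smooth form $\beta$ of one degree higher (possible in all but the top-degree case): then $\dbar^*_\phi\alpha=0$, so the right-hand side vanishes, while the restriction of $\alpha$ to $\Delta$ has no reason to vanish. What your reduction destroys is precisely the feature that makes the lemma true for all $\alpha$: the pairing $\langle f,\alpha\rangle$ only sees the $\dbar$-closed part of $\alpha$, whereas the trace norm sees all of it. The Bochner-type inequality for $T_\alpha=c_q\gamma_\alpha\wedge\bar\gamma_\alpha\wedge\omega^q e^{-\phi}/q!$ that drives the paper's proof (Proposition 3.4.1 of the lecture notes) is valid only when $\dbar\alpha=0$, and the paper proves the trace bound only for such $\alpha$. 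The general case then needs a step your plan does not contain: decompose $\alpha=\alpha^1+\alpha^2$ with $\alpha^1$ closed and $\alpha^2$ orthogonal to all closed forms; then $\dbar^*_\phi\alpha^2=0$, so the right-hand side is unchanged, and $\langle f,\alpha^2\rangle=0$ because $f$ is a closed current cohomologous to a smooth form ($f=f_{\text{smooth}}+\dbar g$, whence $\langle f,\alpha^2\rangle=\langle f_{\text{smooth}},\alpha^2\rangle+\langle g,\dbar^*_\phi\alpha^2\rangle=0$). Without restricting the trace estimate to closed $\alpha$ and adding this decomposition, the proof cannot be completed.

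There is a second missing mechanism, sitting exactly where you flag ``the main obstacle'' but offer no remedy. Integration by parts against $\log(|s|^2e^{-\psi})$ does not produce the weight $1/(|s|^2e^{-\psi})$; it produces the \emph{logarithmic} weight $W=-\log(|s|^2e^{-\psi})$, i.e.\ a term $\int_X W|\dbar^*_\phi\alpha|^2e^{-\phi}$ plus a cross term $\Re\langle\dbar W\wedge\dbar^*_\phi\alpha,\alpha\rangle$. The diagonal term is harmless since $W\leq e^W$, but after Cauchy--Schwarz the cross term leaves the bulk quantity $c_q\int_X e^{-W}i\partial W\wedge\dbar W\wedge\gamma_\alpha\wedge\bar\gamma_\alpha\wedge\omega^q e^{-\phi}/q!$, which is controlled by neither side of the estimate. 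The paper absorbs it by running the entire Stokes/curvature argument a \emph{second} time with the bounded weight $W_1=1-e^{-W}$, whose complex Hessian produces exactly the quantity $-e^{-W}i\partial W\wedge\dbar W$ that has to be absorbed; this bootstrap is an idea, not bookkeeping, and no amount of $\delta$-regularization of the logarithm substitutes for it. Your curvature discussion (positivity of $(Wi\ddbar\phi-i\ddbar\psi)\wedge\omega^q$ via $W\geq 1/\epsilon$ and the two hypotheses of Theorem 3.1) does match the paper, but the two structural gaps --- closedness plus the orthogonal decomposition, and the $W_1$ bootstrap --- are where the proof actually lives.
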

The norm $\|\cdot\|_\Delta$ here is the $L^2$-norm defined by the
K\"ahler form $\omega$ and the metric $\phi$ on $L$. 

\bigskip

\noindent
Now consider the conjugate linear functional
$$
R(\dbar^*_\phi\alpha)=\langle f,\alpha\rangle
$$ 
defined on the space 
$$
E:=\{\dbar^*_\phi\alpha;\, \alpha\,\, \text{smooth}\}.
$$

 By
the lemma, $R$ is bounded  
by the norm
$$
\int_X
\frac{|\dbar^*_\phi\alpha|^2}{|s|^2 e^{-\psi}}e^{-\phi}
$$
on the
subspace $E_0$ of elements of $E$ such that this norm is finite. Clearly, this
subspace consists of forms $\dbar^*_\phi\alpha$   that
vanish on $\Delta$. 
By the Riesz representation theorem, there is a form $w$ such that
$$
R(\dbar^*_\phi\alpha)=\int_X\frac{w\cdot\overline{\dbar^*_\phi\alpha}}{|s|^2
    e^{-\psi}}e^{-\phi} 
$$
for all $\alpha$ with  $\dbar^*_\phi\alpha=0$ on $\Delta$. Moreover, $w$
can be taken to satisfy
$$
\int_X
\frac{|w|^2}{|s|^2 e^{-\psi}}e^{-\phi}\leq C_0.
$$
Substitute
$$
v=w/(|s|^2 e^{-\psi}).
$$
Then 
\be
\langle f,\alpha\rangle=R(\dbar^*_\phi\alpha)=\int_X v\cdot
\overline{\dbar^*_\phi\alpha}e^{-\phi}
\ee
and
\be
\int_X |v|^2 |s|^2 e^{-\phi-\psi}\leq C_0.
\ee
Notice that this {\it does not mean} that  $\dbar v=f$ since we only know that
(3.4) holds for $\alpha$ with  $\dbar^*_\phi\alpha=0$ on $\Delta$.

In order to get smoothness we now choose $v$ with minimal norm defined
in (3.5), and the first objective is to check that there is a
minimizer.
\begin{lma}
Assume that $v_k$ is a sequence of forms such that
$$
\langle f,\alpha\rangle=\int_X v_k\cdot
\overline{\dbar^*_\phi\alpha}e^{-\phi}
$$
for all $\alpha$ with $\dbar^*_\phi\alpha=0$ on the divisor. Assume also that
$$
\int_X  |v-v_k|^2 |s|^2 e^{-\phi-\psi}\rightarrow 0
$$
for some $v$ satisfying
$$
\int_X |v|^2 |s|^2 e^{-\phi-\psi}<\infty.
$$
Then 
$$
\langle f,\alpha\rangle=\int_X v\cdot
\overline{\dbar^*_\phi\alpha}e^{-\phi}
$$
for all $\alpha$ with $\dbar^*_\phi\alpha=0$ on the divisor. 
\end{lma}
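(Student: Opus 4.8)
The plan is to reduce the whole statement to a single application of the Cauchy--Schwarz inequality, with the weight split exactly as it was in the basic estimate. Fix any smooth $\alpha$ with $\dbar^*_\phi\alpha=0$ on $\Delta$. Since each $v_k$ represents the functional, the hypothesis gives $\langle f,\alpha\rangle=\int_X v_k\cdot\overline{\dbar^*_\phi\alpha}\,e^{-\phi}$ for every $k$, and subtracting the candidate limiting identity yields, for each $k$,
$$
\langle f,\alpha\rangle-\int_X v\cdot\overline{\dbar^*_\phi\alpha}\,e^{-\phi}
=\int_X (v_k-v)\cdot\overline{\dbar^*_\phi\alpha}\,e^{-\phi}.
$$
The left-hand side is independent of $k$, so it suffices to show that the right-hand side can be made arbitrarily small as $k\to\infty$; the common value is then forced to be $0$, which is precisely the assertion.

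To estimate the right-hand side I would write $e^{-\phi}$ as the product of the two factors $|s|e^{-\psi/2}e^{-\phi/2}$ and $(|s|e^{-\psi/2})^{-1}e^{-\phi/2}$, attaching the first to $v_k-v$ and the second to $\dbar^*_\phi\alpha$, and then apply Cauchy--Schwarz in $L^2(e^{-\phi}\,dV)$:
$$
\left|\int_X (v_k-v)\cdot\overline{\dbar^*_\phi\alpha}\,e^{-\phi}\right|^2
\leq\left(\int_X |v_k-v|^2\,|s|^2 e^{-\phi-\psi}\right)
\left(\int_X\frac{|\dbar^*_\phi\alpha|^2}{|s|^2 e^{-\psi}}\,e^{-\phi}\right).
$$
The first factor tends to $0$ by the convergence hypothesis. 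The second factor is finite for the fixed $\alpha$ precisely because $\dbar^*_\phi\alpha$ vanishes on $\Delta=\{s=0\}$: being smooth and vanishing along the smooth divisor cut out by $s$, the quotient $\dbar^*_\phi\alpha/s$ is bounded on the compact manifold $X$, so the integrand is dominated by a bounded quantity times the smooth weight $e^{\psi-\phi}$ and is integrable. Letting $k\to\infty$ thus kills the $k$-independent left-hand side of the displayed identity.

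The argument is genuinely soft, so I do not expect a serious obstacle; the one point meriting care is the finiteness of the dual norm $\int_X |\dbar^*_\phi\alpha|^2(|s|^2 e^{-\psi})^{-1}e^{-\phi}$, and this is exactly where the standing restriction to test forms with $\dbar^*_\phi\alpha=0$ on $\Delta$ is used --- the same restriction under which $R$ was represented in the first place. The only remaining work is the bookkeeping of the weight splitting: checking that it reproduces verbatim the weighted norm $\int_X|v_k-v|^2|s|^2 e^{-\phi-\psi}$ appearing in the hypothesis and the finite dual norm, with no stray factors of $|s|$ or $e^{-\psi}$ left over.
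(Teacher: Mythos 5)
Your proof is correct and is essentially the paper's own: the paper disposes of the lemma with exactly this Cauchy--Schwarz inequality, splitting the weight so that the first factor is the hypothesized vanishing quantity $\int_X |v-v_k|^2|s|^2e^{-\phi-\psi}$ and the second is the dual norm $\int_X |\dbar^*_\phi\alpha|^2(|s|^2e^{-\psi})^{-1}e^{-\phi}$. Your additional observation that this dual norm is finite precisely because $\dbar^*_\phi\alpha$ is smooth and vanishes on the smooth divisor $\Delta=\{s=0\}$ is a point the paper leaves implicit, and it is a worthwhile detail to make explicit.
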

This means that the affine space of forms $v$ that satisfy (3.4) is
closed for the norm in (3.5), so it has an element of minimal
norm. The proof of the lemma is clear since
$$
|\int_X (v-v_k)\cdot\overline{\dbar^*_\phi\alpha} e^{-\phi-\psi}|^2\leq
\int_X  |v-v_k|^2 |s|^2 e^{-\phi-\psi} \int_X
\frac{|\dbar^*_\phi\alpha|^2}{|s|^2 e^{-\psi}}e^{-\phi} .
$$
The next point is to see that if $v$ is a minimizer, then $sv$ is a
harmonic form, hence smooth.
\begin{lma}
Assume that $v$ minimizes the norm in (3.5) among all solutions to
(3.4). Then $\dbar^*_{\phi+\psi}(sv)=0$.
\end{lma}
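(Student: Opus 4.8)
The plan is to characterize the minimizer variationally. The key observation is that the defining condition (3.4) holds for all $\alpha$ with $\dbar^*_\phi\alpha=0$ on $\Delta$, so the set of admissible $v$ is an affine subspace: if $v$ is any one solution and $v'$ is another, then $v-v'$ is orthogonal (in the weighted inner product of (3.5)) to the image of the "test-form" map, restricted appropriately. More precisely, the perturbations that keep (3.4) valid are exactly those $h$ for which $\int_X h\cdot\overline{\dbar^*_\phi\alpha}\,e^{-\phi}=0$ whenever $\dbar^*_\phi\alpha=0$ on $\Delta$.

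First I would write the minimality condition. Since $v$ minimizes $\int_X |v|^2|s|^2 e^{-\phi-\psi}$ over the affine space of solutions to (3.4), the minimizer must be orthogonal, with respect to the inner product $\langle a,b\rangle = \int_X a\cdot\bar b\,|s|^2 e^{-\phi-\psi}$, to every admissible direction $h$. The admissible directions $h$ are precisely the forms satisfying $\int_X h\cdot\overline{\dbar^*_\phi\alpha}\,e^{-\phi}=0$ for all $\alpha$ with $\dbar^*_\phi\alpha=0$ on $\Delta$. By the Hahn--Banach / orthogonal-complement description of such subspaces, the admissible $h$ are exactly those in the closure of $\{\dbar^*_\phi\beta : \dbar^*_\phi\beta=0 \text{ on }\Delta\}$ measured in the right dual pairing — equivalently, the directions $h=\dbar^*_\phi\beta$ built from test forms $\beta$ that vanish appropriately on $\Delta$ are exactly the legal perturbations.

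Concretely, I would argue as follows. Let $h$ be any perturbation of the form $h=\dbar^*_\phi\beta$ where $\beta$ is a smooth $(n,q)$-form with $\dbar^*_\phi\beta=0$ on $\Delta$. Then $v+th$ also satisfies (3.4) for every scalar $t$, because the functional $\alpha\mapsto\int_X h\cdot\overline{\dbar^*_\phi\alpha}\,e^{-\phi}$ vanishes on the relevant $\alpha$ (this is the symmetry coming from $\dbar^*_\phi$ being formally self-dual in the pairing). Minimality of $v$ then forces
$$
\int_X v\cdot\overline{h}\,|s|^2 e^{-\phi-\psi}=0
$$
for all such $h$. Writing $h=\dbar^*_\phi\beta$ and absorbing the weight, this says
$$
\int_X (sv)\cdot\overline{\dbar^*_\phi\beta}\,e^{-\phi-\psi}=0
$$
after rewriting $|s|^2 e^{-\psi}\cdot(1/s)$ and pairing against $s\beta$; the weighted adjoint with respect to the metric $\phi+\psi$ is exactly $\dbar^*_{\phi+\psi}$. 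Unwinding the weight conversion between $\dbar^*_\phi$ and $\dbar^*_{\phi+\psi}$ — multiplying and dividing by $s$ to move the singular factor $|s|^2 e^{-\psi}$ in and out — this is precisely the statement that $sv$ is weakly $\dbar^*_{\phi+\psi}$-closed, i.e. $\dbar^*_{\phi+\psi}(sv)=0$.

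The main obstacle I expect is \emph{the bookkeeping of the weights and the boundary condition on $\Delta$}. The subtlety is that the perturbations $h$ are not allowed to be arbitrary $\dbar^*_\phi\beta$: the test form $\beta$ must satisfy $\dbar^*_\phi\beta=0$ on $\Delta$ for (3.4) to be preserved. I would need to check carefully that this restricted class of perturbations is still rich enough to force $\dbar^*_{\phi+\psi}(sv)=0$ as a distributional (weak) identity, and that conversely the vanishing on $\Delta$ does not introduce a spurious boundary term. The cleanest way is to verify that after the substitution $sv$ and the change of weight from $\phi$ to $\phi+\psi$, the constraint "$\dbar^*_\phi\beta=0$ on $\Delta$" becomes exactly the natural admissibility condition for $s\beta$, so that the orthogonality relation integrates against a dense enough family of test forms to yield $\dbar^*_{\phi+\psi}(sv)=0$ identically.
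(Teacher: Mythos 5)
Your variational framing (minimizer orthogonal to admissible perturbations) is the right starting point, but the class of perturbations you use is wrong, and this breaks the proof at its core. You perturb by $h=\dbar^*_\phi\beta$ where $\beta$ is smooth with $\dbar^*_\phi\beta=0$ on $\Delta$; these $h$ are exactly the elements of the space $E_0$ against which the constraint (3.4) is tested, and they do \emph{not} preserve (3.4): taking $\beta=\alpha$ gives $\int_X h\cdot\overline{\dbar^*_\phi\alpha}\,e^{-\phi}=\int_X|\dbar^*_\phi\alpha|^2e^{-\phi}$, which is nonzero unless $\dbar^*_\phi\alpha\equiv 0$. There is no ``self-duality'' of $\dbar^*_\phi$ in this pairing; its adjoint is $\dbar$. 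Consequently the directions that are automatically admissible are the $\dbar$-exact ones: for any smooth $u$ one has $\int_X \dbar u\cdot\overline{\dbar^*_\phi\alpha}\,e^{-\phi}=\langle\dbar\dbar u,\alpha\rangle_\phi=0$ for \emph{all} $\alpha$, so $v-\dbar u$ still solves (3.4). This is what the paper does: minimality then gives $\int_X v\cdot\overline{\dbar u}\,|s|^2e^{-\phi-\psi}=0$, which after writing $|s|^2\, v\cdot\overline{\dbar u}=(sv)\cdot\overline{\dbar(su)}$ (using that $s$ is holomorphic) becomes $\int_X sv\cdot\overline{\dbar(su)}\,e^{-\phi-\psi}=0$, i.e.\ $sv$ is orthogonal to $\dbar$-exact forms built from forms $su$ vanishing on $\Delta$.

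There are two further problems even if your orthogonality relation were available. First, your concluding step inverts the roles of $\dbar$ and $\dbar^*$: orthogonality of $sv$ to forms of the type $\dbar^*_{\phi+\psi}(\cdot)$ would say that $sv$ is killed by the adjoint of $\dbar^*_{\phi+\psi}$, i.e.\ $\dbar(sv)=0$ weakly --- which is the content of the \emph{next} lemma in the paper (Lemma 3.5), not the present one. To get $\dbar^*_{\phi+\psi}(sv)=0$ you must pair $sv$ against $\dbar$-exact test forms, as above. Second, you omit the removable-singularity step, which is genuinely needed: since the admissible test forms $su$ all vanish on $\Delta$, the orthogonality only yields $\dbar^*_{\phi+\psi}(sv)=0$ weakly on $X\setminus\Delta$. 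The paper completes the argument by observing that $sv$ has finite $L^2$-norm, so the divisor is removable for this equation (a $\dbar^*$-equation for $sv$ is a $\dbar$-equation for $*$ of $sv$, and an $L^2$ form satisfying such an equation off a divisor satisfies it everywhere). Without both corrections the proposal does not prove the lemma.
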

\begin{proof}
If $v$ is a minimizer then
$$
\int_X  |v|^2 |s|^2 e^{-\phi-\psi}\leq \int_X  |v-\dbar u|^2 |s|^2
e^{-\phi-\psi} 
$$
for all smooth $u$. This means that
$$
\int_X sv\cdot\overline{ \dbar su} e^{-\phi-\psi}=0.
$$
Hence $\dbar^*_{\phi+\psi} sv=0$ at least outside of $\Delta$. But
$sv$ has finite 
$L^2$-norm so a divisor is removable for this equation.( A
$\dbar^*$-equation for a form is a $\dbar$-equation for $*$ of the form.)
\end{proof}
Finally we have 
\begin{lma}
$$
\dbar(sv)=0.
$$
\end{lma}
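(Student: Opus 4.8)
The plan is to prove $\dbar(sv)=0$ in two stages: first away from $\Delta$, where it follows almost immediately from the defining relation (3.4), and then across $\Delta$, where the global $L^2$-bound (3.5) lets me remove the divisor. The guiding observation is that, although (3.4) is only asserted for test forms $\alpha$ with $\dbar^*_\phi\alpha=0$ on $\Delta$, this class of admissible forms is already rich enough to pin down $\dbar v$ on the open set $X\setminus\Delta$.

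First I would note that every smooth $\alpha$ with $\supp\alpha\Subset X\setminus\Delta$ is admissible in (3.4): since $\dbar^*_\phi$ is a first order differential operator, $\dbar^*_\phi\alpha$ then vanishes in a whole neighbourhood of $\Delta$, so in particular on $\Delta$. For such $\alpha$ the right-hand side of (3.4) vanishes, because $f=u\wedge[\Delta]$ is supported on $\Delta$ while $\alpha$ vanishes near $\Delta$. Hence
$$
\int_X v\cdot\overline{\dbar^*_\phi\alpha}\,e^{-\phi}=0
$$
for all smooth $\alpha$ supported in $X\setminus\Delta$. By the very definition of the distributional $\dbar$ this says precisely that $\dbar v=0$ as a current on $X\setminus\Delta$. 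As $s$ is holomorphic, hence smooth with $\dbar s=0$ there, the Leibniz rule gives $\dbar(sv)=s\,\dbar v=0$ on $X\setminus\Delta$.

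It remains to check that no contribution is created along $\Delta$. By (3.5) we have $\int_X|sv|^2e^{-\phi-\psi}\leq C_0$, and since $e^{-\phi-\psi}$ is a smooth strictly positive weight on the compact manifold $X$, the form $sv$ is globally in $L^2$. Thus $sv$ is an $L^2$-form which is $\dbar$-closed off the smooth divisor $\Delta$, and I would invoke exactly the removability principle already used in the proof of the preceding lemma: a smooth divisor is removable for such an equation. Applying the Hodge star turns this into the $\dbar^*$-removability statement used there, and in either formulation it reduces, upon slicing transversally to $\Delta$, to Riemann's removable singularity theorem for $L^2$ holomorphic functions of one variable. Consequently $\dbar(sv)=0$ on all of $X$.

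The one genuinely substantive step is this last passage across $\Delta$. A priori $\dbar(sv)$, computed on all of $X$, could carry a residue-type term supported on $\Delta$ — the very obstruction that prevented us from solving $\dbar v=f$ in the first place — and multiplying by $s$ does not obviously annihilate such a term if it involves transverse derivatives of $[\Delta]$. The point of working with $sv\in L^2$ rather than with $v$ itself is exactly that a finite $L^2$-norm forbids any concentration of mass on the codimension-one set $\Delta$, which is what the removability theorem encodes; this is where the argument really uses the estimate (3.5).
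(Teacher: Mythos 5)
Your proof is correct, but it follows a genuinely different route from the paper's. The paper verifies $\langle sv,\dbar^*_{\phi+\psi}\xi\rangle_{\phi+\psi}=0$ for \emph{every} smooth $L+S$-valued test form $\xi$ in one stroke: it multiplies $\xi$ by the anti-holomorphic section $\sigma=\bar s e^{-\psi}$ to produce an $L$-valued form $\alpha_\xi=\sigma\xi$ vanishing on $\Delta$, uses the identity (3.6) to see both that $\dbar^*_\phi\alpha_\xi$ vanishes on $\Delta$ (so (3.4) is applicable to $\alpha_\xi$) and that the two pairings agree, and then concludes because $\langle f,\alpha_\xi\rangle=0$, $f$ being supported on $\Delta$ where $\alpha_\xi$ dies. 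You instead use only the weakest part of (3.4) --- test forms compactly supported in $X\setminus\Delta$, which are trivially admissible --- to get $\dbar v=0$, hence $\dbar(sv)=0$, off the divisor, and then cross $\Delta$ by the $L^2$-removability of divisors for the distributional $\dbar$-equation, which is exactly where (3.5) enters. That removability principle is legitimate and is in fact the same one the paper invokes (without proof) for the $\dbar^*$-equation in the preceding lemma; its standard proof is not your suggested ``slicing plus one-variable Riemann removability'' (which is delicate for $L^2$ forms), but precisely the cutoff construction of Lemma 2.2: pair $sv$ against $(1-\rho_\epsilon)\chi$ and kill the error term involving $\dbar\rho_\epsilon$ by Cauchy--Schwarz, using $\|sv\|_{L^2}<\infty$ and $\|\dbar\rho_\epsilon\|_{L^2}\to 0$. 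The trade-off: the paper's multiplication-by-$\bar s e^{-\psi}$ trick treats $\Delta$ and its complement simultaneously, needs no removability lemma, and shows exactly how the admissibility condition in (3.4) is exploited; your argument is more modular and uses only standard tools, but it delegates the real content --- ruling out a residue term on $\Delta$, which you correctly identify as the crux --- to a removability lemma that should be justified by the cutoff argument above.
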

\begin{proof}
Since $sv$ takes values in  $L+S$, we have to check that
$$
\langle sv, \dbar^*_{\phi+\psi}\xi\rangle_{\phi+\psi}=0
$$
for any smooth $(n,q)$-form $\xi$ with values in $L+S$. For this, note
first that
$$
\sigma:=\bar s e^{-\psi}= |s|^2 e^{-\psi}/s
$$
is a smooth section with values in $-S$, which vanishes on
$\Delta$. Therefore $\alpha_\xi:=\sigma \xi$ is $L$-valued and vanishes on
$\Delta$. Moreover, one easily checks that
\be
\bar s \dbar^*_{\phi+\psi}\xi=e^{\psi}\dbar^*_\phi\alpha_\xi.
\ee
Hence
$$
\langle sv, \dbar^*_{\phi+\psi}\xi\rangle_{\phi+\psi}=\langle
v,\dbar^*_\phi\alpha_\xi \rangle_\phi=\langle f,\alpha_\xi\rangle_\phi,
$$
where the last equality follows from (3.4). We are allowed to apply
(3.4) because  $\dbar^*_\phi\alpha_\xi$ is zero on $\Delta$ by
(3.6). Since $f$ is supported on 
$\Delta$ where $\alpha_\xi$ vanishes, $\langle
f,\alpha_\xi\rangle_\phi$  equals zero, and we are
done. 
\end{proof}

\bigskip

All in all we have now seen that $U:=sv$ is harmonic and therefore
smooth, if $v$ is the minimal solution of the dual problem. What
remains is to investigate the behaviour of $U$ on the divisor. Write
$U=ds\wedge \tilde u$ on the divisor. Let $\alpha$ be a smooth
$L$-valued $(n,q+1)$-form
such that $\dbar^*_\phi\alpha=0$ on the divisor and write
$$
\alpha = \gamma_\alpha\wedge \omega^{q+1}/(q+1)!
$$
for some (uniquely determined) $(n-q-1,0)$-form $\gamma_\alpha$. 
Then for any $(n,q+1)$-form $g$
$$
\langle g,\alpha\rangle_\omega \omega^n/n!=g\wedge\bar\gamma_\alpha,
$$
(see \cite{PCMI} for more on this).

\bigskip

\noindent
Hence
$$
\langle f,\alpha\rangle=\int_X f\wedge \bar\gamma_\alpha e^{-\phi}=
\int_\Delta u\wedge \bar\gamma_\alpha e^{-\phi}. 
$$
On the other hand, by (3.4) this also equals
$$
\int_X v\cdot
\overline{\dbar^*_\phi\alpha}e^{-\phi}=\int_X U/s\cdot
\overline{\dbar^*_\phi\alpha}e^{-\phi}=\int_X \dbar(1/s)\wedge U\wedge
\bar\gamma_\alpha e^{-\phi}=\int_\Delta\tilde u\wedge
\bar\gamma_\alpha e^{-\phi}.
$$
From this we see that
\be
\int_\Delta u\wedge \bar\gamma_\alpha e^{-\phi}=\int_\Delta \tilde u\wedge
\bar\gamma_\alpha e^{-\phi}
\ee
for all $\alpha$ such that $\dbar^*_\phi\alpha=0$ on $\Delta$.  This latter
condition is equivalent to saying that
$$
\dbar(\bar\gamma_\alpha e^{-\phi})=0.
$$
Let $\bar\gamma_\alpha e^{-\phi}=:\chi$. This is a $(0, n-q-1)$-form with
values in $-L$. 
Hence
\be
\int_\Delta (u-\tilde u)\wedge\chi=0
\ee
for all $(0, n-q-1)$-forms  $\chi$ with values in $-L$ such that
$\dbar\chi=0$ on $\Delta$. The $\dbar$ 
operator here is the $\dbar$ on $X$, but, by the next lemma, the same
thing holds if only $\dbar_\Delta\chi=0$.
\begin{lma} Let $\chi$ be a smooth $-L$-valued $(0,p)$-form on $X$ such
  that $\dbar_\Delta\chi=0$ on $\Delta$. Then there is a smooth form
  on 
  $X$, $\tilde\chi$ 
  such that $\dbar_X\tilde\chi=0$ on $\Delta$ and $\chi=\tilde\chi$ on
  $\Delta$.
\end{lma}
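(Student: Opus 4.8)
The plan is to correct $\chi$ by subtracting a form that vanishes on $\Delta$ but whose $\dbar_X$ cancels $\dbar_X\chi$ along $\Delta$. Let $\iota:\Delta\hookrightarrow X$ be the (holomorphic) inclusion and set $g:=\dbar_X\chi$, a smooth $-L$-valued $(0,p+1)$-form. I would look for a smooth form $\eta$ on $X$ that vanishes on $\Delta$ and satisfies $\dbar_X\eta=g$ at the points of $\Delta$; then $\tilde\chi:=\chi-\eta$ finishes the proof, since $\tilde\chi=\chi$ on $\Delta$ and $\dbar_X\tilde\chi=g-g=0$ there. The hypothesis enters exactly through the identity
$$
\iota^*g=\iota^*\dbar_X\chi=\dbar_\Delta\iota^*\chi=\dbar_\Delta\chi=0,
$$
where pullback commutes with $\dbar$ because $\iota$ is holomorphic (check it in a local holomorphic frame of $-L$). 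Thus the \emph{tangential} part of $g$ along $\Delta$ already vanishes, and only a normal antiholomorphic component can survive.

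To build $\eta$ I would first work in coordinates adapted to $\Delta=\{z_1=0\}$ and split $g=g_0+d\bar z_1\wedge g_1$, where neither $g_0$ nor $g_1$ contains $d\bar z_1$. Then $\iota^*g=g_0|_{z_1=0}$, so the vanishing of the tangential part forces $g_0$ to vanish at every point of $\Delta$; hence $g=d\bar z_1\wedge g_1$ there. Setting $\eta:=\bar z_1\,g_1$, I compute
$$
\dbar_X\eta=d\bar z_1\wedge g_1+\bar z_1\,\dbar_X g_1,
$$
which restricts to $d\bar z_1\wedge g_1=g$ at points of $\Delta$, while $\eta$ itself vanishes on $\Delta$ because of the factor $\bar z_1$. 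So locally $\chi-\eta$ has the desired properties.

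Next I would globalize by a partition of unity. Cover a tubular neighbourhood of $\Delta$ by adapted charts $U_\alpha$ with defining coordinate $z_1^\alpha$, form the local corrections $\eta_\alpha:=\overline{z_1^\alpha}\,g_1^\alpha$, and set $\eta:=\sum_\alpha\lambda_\alpha\eta_\alpha$ for a partition of unity $\{\lambda_\alpha\}$ subordinate to the cover, extended by zero off the neighbourhood. Each $\eta_\alpha$ vanishes on $\Delta$, so $\eta=0$ on $\Delta$ and the restriction of $\chi$ is unchanged. The crucial observation is that $\dbar_X\eta_\alpha$ restricts to $g$ on $\Delta$ for \emph{every} $\alpha$ — both sides are simply the global form $g$ there — so the cross terms $\dbar\lambda_\alpha\wedge\eta_\alpha$ vanish on $\Delta$ and
$$
\dbar_X\eta\big|_\Delta=\Big(\sum_\alpha\lambda_\alpha\Big)g=g.
$$
Hence $\tilde\chi:=\chi-\eta$ is smooth on $X$, equals $\chi$ on $\Delta$, and has $\dbar_X\tilde\chi=0$ on $\Delta$.

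The main point to get right — and the conceptual heart of the argument — is \emph{which} component to correct. The tempting alternative, keeping the normal part and adjusting the tangential part, would require solving a tangential equation of the form $\dbar_\Delta(\,\cdot\,)=(\partial_{\bar z_1}\chi)|_\Delta$, whose solvability is \emph{not} guaranteed: there is an integrability obstruction, since the right-hand side need not be $\dbar_\Delta$-closed. The argument sidesteps this entirely by correcting the \emph{normal} ($d\bar z_1$) component, which is unobstructed because the genuine normal coordinate $\bar z_1$ is at our disposal. Everything else — smoothness of $\eta$, the holomorphic naturality of $\dbar$ under $\iota$, and the Hadamard-type divisibility of $g_0$ by the defining coordinate — is routine, leaving only the patching above to be checked carefully.
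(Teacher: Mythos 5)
Your proposal is correct and follows essentially the same route as the paper: in coordinates adapted to $\Delta=\{z_1=0\}$ the hypothesis forces $\dbar_X\chi=d\bar z_1\wedge g_1$ at points of $\Delta$, one subtracts the multiple $\bar z_1 g_1$ of $\bar z_1$ locally, and patches with a partition of unity (your observation that the cross terms $\dbar\lambda_\alpha\wedge\eta_\alpha$ vanish on $\Delta$ is exactly what makes the patching work). Your write-up simply makes explicit what the paper's ``suitable multiple of $\bar z_1$'' is.
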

\begin{proof} Locally the divisor is given by an equation $z_1=0$ in
  some local chart. The hypothesis then means that $\dbar\chi$ is
  divisible by $d\bar z_1$. To get a local extension it therefore
  suffices to subtract a suitable multiple of $\bar z_1$, and one then
  obtains $\tilde\chi$ from a partition of unity. 
\end{proof}
It follows from the lemma that (3.8) holds for any $\chi$ on $\Delta$
such that $\dbar_\Delta\chi=0$. 
 But this means that  the
difference $u-\tilde u$ is $\dbar$-exact. Hence we have proved
Koziarz's theorem that $u$ can be extended up to an exact error, and
the proof of Theorem 3.1 then follows from Proposition 2.1. 

\bigskip

All that remains is now to prove Lemma 3.2.

\subsection{Proof of the basic estimate}
This follows closely the
proof in \cite{Berndtsson}, and the proof in the compact case is
described in \cite{PCMI}, and we refer to these notes for more details
on the computations that follow. 

We first write as above 
$$
\alpha=\gamma\wedge\omega^{q+1}/(q+1)!
$$
so that $\gamma$ is an $L$-valued $(n-q-1,0)$-form. 
Then define a scalar valued $(n-1,n-1)$-form
$$
T_\alpha= c_q\gamma\wedge \bar\gamma\wedge\omega^q e^{-\phi}/q!
$$
where $c_q$ is a unimodular constant chosen so that $T_\alpha$ is a
positive form. We will prove the basic estimate first assuming that
$\dbar\alpha=0$. In that case it follows from  Proposition 3.4.1 in
\cite{PCMI} that 
\be
i\ddbar T_\alpha\geq -2\Re (\dbar\dbar^*_\phi\alpha,\alpha)\omega^n/n!
+ i\ddbar\phi\wedge T_\alpha.
\ee
Let
$$
W:=-\log( |s|^2 e^{-\psi}).
$$
By the hypothesis in Theorem 3.1, $W\geq 1/\epsilon$. Moreover
$$
i\ddbar W=i\ddbar\psi -c[\Delta].
$$
Multiply (3.9) by $W$ and apply Stokes' formula. This gives
\be
\int_X
(Wi\ddbar\phi\wedge\omega^q-i\ddbar\psi\wedge\omega^q)/q!\wedge c_q \gamma\wedge
\bar\gamma e^{-\phi} +c\int_\Delta c_q\gamma\wedge\bar\gamma\wedge \omega^q/q!
e^{-\phi}\leq
2\Re\langle\dbar\dbar^*_\phi\alpha, W\alpha\rangle.
\ee
By the hypotheses in Theorem (3.1) the first integral in the left hand
side is nonnegative, so we get
$$
c\int_\Delta c_q\gamma\wedge\bar\gamma\wedge\omega^q/q!
e^{-\phi}\leq
2\Re\langle\dbar\dbar^*_\phi\alpha, W\alpha\rangle.
$$
On the other hand
$$
|\langle f,\alpha\rangle|^2=|\int_X f\wedge \bar\gamma e^{-\phi}|^2=
|\int_\Delta u\wedge\bar\gamma e^{-\phi}|^2.
$$
By the Cauchy inequality we get, since by assumption $\|u\|_\Delta\leq
1$ that
$$
|\langle f,\alpha\rangle|^2\leq \|\gamma\|^2_\Delta=\int_\Delta
c_q\gamma\wedge\bar\gamma\wedge\omega^q/q! e^{-\phi}\leq
2c^{-1}\Re\langle\dbar\dbar^*_\phi\alpha, W\alpha\rangle.
$$
The right hand side equals
$$
2\int_X W|\dbar^*_\phi\alpha|^2 e^{-\phi}-2\Re\langle \dbar
W\wedge\dbar^*_\phi\alpha, \alpha\rangle.
$$
The first term is obviously OK since $W\leq e^W$. For the second term
we write
$$
II:=\langle \dbar W\wedge\dbar^*_\phi\alpha, \alpha\rangle=
\int_X \dbar W\wedge\dbar^*_\phi\alpha\wedge\bar\gamma_\alpha
e^{-\phi}.
$$
By Cauchy's inequality
$$
2|II|\leq
\int_X\frac{|\dbar^*_\phi\alpha|^2}{|s|^2e^{-\psi}}e^{-\phi}+
c_q\int_X e^{-W}\partial W\wedge\dbar
W\wedge\gamma_\alpha\wedge\bar\gamma_\alpha\wedge\omega^q/q!e^{-\phi}.
$$
It is only the last term that we need to worry about. Let
$$
W_1=(1-e^{-W}).
$$
Then $0<W_1<1$ and 
$$
i\ddbar W_1=-e^{-W}i\partial W\wedge\dbar W.
$$
We now repeat the same argument as above, but with $W$ replaced by
$W_1$. The result is
$$
c_q\int_X e^{-W}\partial W\wedge\dbar
W\wedge\gamma_\alpha\wedge\bar\gamma_\alpha\wedge\omega^q/q!e^{-\phi}\leq
2\int_X W_1|\dbar^*_\phi\alpha|^2 e^{-\phi}-2\Re\langle \dbar
W_1\wedge\dbar^*_\phi\alpha, \alpha\rangle.
$$
The first term is controlled since $W_1<1$ and the second term can
easily be absorbed in the left hand side. This completes the proof of
the basic estimate in case $\dbar\alpha=0$.

The general case is easily reduced to this special case. We decompose
$$
\alpha=\alpha^1+\alpha^2
$$
where $\alpha^1$ is $\dbar$-closed and $\alpha^2$ is orthogonal to the
space of $\dbar$-closed forms. Then in particular $\alpha^2$ is
orthogonal to $\dbar$-exact forms, so $\dbar^*_\phi\alpha^2=0$. Hence
$\alpha^1$ satsifies $\dbar\alpha^1=0$ and
$\dbar^*_\phi\alpha^1=\dbar^*_\phi\alpha$. This means, by elliptic
regularity that $\alpha^1$, and therefore $\alpha^2$ are both
smooth. Now we claim that booth sides in the basic estimate are
unchanged if we replace $\alpha$ by $\alpha^1$. Since we know the
basic estimate holds for $\alpha^1$ this is all we need. That the
right hand side is unchanged we have already seen. That the left hand
side is unchanged follows since $f$ is closed and $\alpha^2$ is
orthogonal to closed forms. There is a minor problem here, coming from
the fact that $f$ is not an $L^2$-form. However, $f$ is cohomologous
to a smooth form
$$
f=f_{\text{smooth}}+\dbar g
$$
and this proves the claim since $\alpha^2$ is smooth and satsifies
$\dbar^*_\phi\alpha^2=0$.

\def\listing#1#2#3{{\sc #1}:\ {\it #2}, \ #3.}

\end{document}